\date{}
\theoremstyle{plain}
\newtheorem{theorem}{Theorem}[section]
\newtheorem{definition}{Definition}[section]
\numberwithin{equation}{section}
\def\bc{\begin{center}}
\def\ec{\end{center}}
\def\X{{\cal X}} 
\def\({\left(}
\def\){\right  )}
\def\[{\left[}
\def\]{\right]}
\def\bc{\begin{center}}
\def\ec{\end{center}}
\begin{document}
\bc {\bf Symmetry and Dynamical Analysis of a Discrete Time Model: The Higher Order Berverton-Holt Equation
  }\ec
\medskip
\bc
Mensah Folly-Gbetoula \footnote{Author: Mensah.Folly-Gbetoula@wits.ac.za} 
\\ School of Mathematics, University of the Witwatersrand, Wits 2050, Johannesburg, South Africa.\\
\ec
\begin{abstract}
\noindent  In this paper, we study the higher-order Beverton-Holt equation.  We derive non trivial symmetries, and thereafter, solutions are obtained. For constant rate and carrying capacity, we study the periodic nature of the solution and analyze the stability of the equilibrium points have been analyzed.
\end{abstract}
\textbf{2000 Mathematics Subject Classification}: 76M60, 39A05, 39A11\\
\textbf{Keywords}: Difference equation, symmetry, reduction, group invariant, periodic solution
\section{Introduction} \setcounter{equation}{0}
Population studies is a scientific study of animal and human populations. More often, this study involves parameters that are highly connected to the age, gender, geographic distribution as well as the evolution of the population understudy. Many models have been developed: logistic model, the Ricker model,  the Beverton-Holt model, etc. The latter was first introduced in the topic of fisheries by Berverton and Holt in the twentieth century and is known to be  \cite{Bev0}
\begin{equation*}
z_{n + 1} = \frac{\mu K z_{n}}{K + (1-\mu)z_{n}},
\end{equation*}
where $K>0, \mu >1$ are respectively,  the carrying capacity and the inherent growth rate.
\par \noindent
 Later, many authors were attracted by  the periodically forced nonautonomous delay higher-order Beverton-Holt model \cite{Bev}
\begin{align}\label{bohner}
z_{n+k}=\frac{\mu _n K_n z_n}{K_n +(\mu_n -1)z_n}, \qquad n\in \mathbb{N}_0,
\end{align}
where $\mu_n> 1$, $K_n>0$ are non-negative $p$-periodic sequences and the initial conditions   $z_i, i=0, \dots, k-1$, are all positive and interesting results were obtained. 
\par \noindent 
In this work, we study the invariance properties of \eqref{bohner} and we construct its solutions via the invariant of their group of transformations. As one would expect, the study of existence of periodic solutions and the stability character of the solutions becomes easier once  the form of the solutions are known. Higher-order difference equations have been studied from different angles by many researchers \cite{El3, FK1, FK, kgat,Ibrahim, RK, IY}.
\subsection{Preliminaries}
For a deeper knowledge of symmetry analysis of recurrence  equations, one can refer to \cite{hydon0} from which most of our notation and theorems are picked. To start, let $\textbf{x}=(x_1, \dots, x_q)$ be some continuous variables in a given differential equation and $$\mathcal{T}:\textbf{x}\rightarrow \bar{\textbf{x}}(\textbf{x})$$ be a (local) point transformations. 
\begin{definition}
	A parameterized set of transformations
	\begin{equation}\label{Gtransfo}
	\mathcal{T}_{\varepsilon}(\textbf{x})\equiv  \hat{{\textbf{x}}}({\textbf{x}}; \varepsilon)
	\end{equation}
	is a one-parameter local Lie group of transformations if the set of conditions below is  satisfied: 
	\begin{enumerate}
		\item  $\mathcal{T}_0$ is the identity map, so that $\hat{\textbf{x}}=\textbf{x}$ when $\varepsilon =0$.
		\item $\mathcal{T}_{\gamma}\mathcal{T}_{\varepsilon}=\mathcal{T}_{\gamma+\varepsilon}$ for every ${\gamma}, \varepsilon$ sufficiently close to $0$.
		\item Every $\hat{x}_{\alpha}$ can be represented as a Taylor series in $\varepsilon$, that is, $$\hat{x}_{\alpha}(\textbf{x}; \varepsilon)=x_{\alpha} + \varepsilon \eta_{\alpha}(\textbf{x})+O(\varepsilon ^2), \qquad \alpha=0, 1, \dots, q.$$
	\end{enumerate}
\end{definition}
\begin{definition}
	The infinitesimal generator of the one-parameter Lie group of point transformations \eqref{Gtransfo} is the operator
	\begin{align}
	X=X(\textbf{x})=\eta(\textbf{x})\cdot \Delta=\sum_{\alpha=1}^{q}\eta_{\alpha}(\textbf{x})\frac{\partial}{\partial x_{\alpha}},
	\end{align}
	and $\Delta$ is the gradient operator.
\end{definition}
\begin{theorem}
	$F(x)$ is invariant under the Lie group of transformations \eqref{Gtransfo} if and only if
	\begin{align}
	XF(x)=0.
	\end{align}
\end{theorem}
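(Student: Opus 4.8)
The plan is to prove both implications straight from the definition of invariance --- namely $F(\hat{\textbf{x}}(\textbf{x};\varepsilon)) = F(\textbf{x})$ for every $\varepsilon$ in a neighbourhood of $0$ --- using only the three group axioms of the preceding definition together with the chain rule. No machinery beyond elementary calculus is needed.

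For the necessity ($\Rightarrow$), I would begin from the identity $F(\hat{\textbf{x}}(\textbf{x};\varepsilon)) = F(\textbf{x})$, differentiate both sides with respect to $\varepsilon$, and evaluate at $\varepsilon = 0$. The right-hand side yields $0$, while the chain rule turns the left-hand side into $\sum_{\alpha} \frac{\partial F}{\partial x_\alpha}\bigl(\hat{\textbf{x}}(\textbf{x};0)\bigr)\,\frac{\partial \hat{x}_\alpha}{\partial \varepsilon}\big|_{\varepsilon=0}$. Axiom~1 gives $\hat{\textbf{x}}(\textbf{x};0) = \textbf{x}$, and the Taylor expansion in axiom~3 identifies $\frac{\partial \hat{x}_\alpha}{\partial \varepsilon}\big|_{\varepsilon=0} = \eta_\alpha(\textbf{x})$, so the left-hand side is exactly $\sum_\alpha \eta_\alpha(\textbf{x})\,\frac{\partial F}{\partial x_\alpha}(\textbf{x}) = XF(\textbf{x})$. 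Hence $XF(\textbf{x}) = 0$.

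For the sufficiency ($\Leftarrow$), assume $XF \equiv 0$ and set $g(\varepsilon) = F\bigl(\hat{\textbf{x}}(\textbf{x};\varepsilon)\bigr)$; the goal is to show $g$ is constant, since then $g(\varepsilon) = g(0) = F(\textbf{x})$ is precisely invariance. The crucial manoeuvre is to evaluate $g'(\varepsilon)$ at an \emph{arbitrary} $\varepsilon$, not just at $0$: by the composition law in axiom~2, $\hat{\textbf{x}}(\textbf{x};\varepsilon+\delta) = \hat{\textbf{x}}\bigl(\hat{\textbf{x}}(\textbf{x};\varepsilon);\delta\bigr)$, so writing $\textbf{y} = \hat{\textbf{x}}(\textbf{x};\varepsilon)$ we obtain $g'(\varepsilon) = \frac{d}{d\delta}\big|_{\delta=0} F\bigl(\hat{\textbf{x}}(\textbf{y};\delta)\bigr) = \sum_\alpha \eta_\alpha(\textbf{y})\,\frac{\partial F}{\partial x_\alpha}(\textbf{y}) = (XF)(\textbf{y}) = 0$ by hypothesis. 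Thus $g' \equiv 0$ on the interval of definition, $g$ is constant there, and the invariance condition follows.

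The only point requiring care is regularity: one needs $F$ and the maps $\textbf{x} \mapsto \hat{\textbf{x}}(\textbf{x};\varepsilon)$ to be sufficiently smooth for the chain rule and the interchange of derivatives to be legitimate, and one needs the parameter range on which the group law holds to be an interval so that $g' \equiv 0$ genuinely forces $g$ to be constant. Both are built into the notion of a local Lie group of transformations assumed throughout, so I do not expect a real obstacle here; the substantive idea is simply the translation trick in the sufficiency direction, which reduces the derivative of $g$ at a general parameter value to a derivative at the identity via axiom~2.
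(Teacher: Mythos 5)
Your argument is correct and is the standard proof of the infinitesimal invariance criterion: differentiation at $\varepsilon=0$ gives necessity, and the group composition law reduces $g'(\varepsilon)$ at arbitrary $\varepsilon$ to the infinitesimal action at the transformed point, giving sufficiency. Note that the paper itself offers no proof of this statement --- it is quoted as a preliminary from the cited literature (Hydon) --- so there is nothing to compare against; your write-up, including the observation that $XF\equiv 0$ must hold identically (not merely at one point) for the sufficiency direction, is exactly what a complete proof requires.
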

\noindent Consider a forward difference equation
\begin{align}\label{general2}
z_{n+k}=\mathcal{A}(n,z_n, z_{n+1}, \dots, z_{n+k-1}), \qquad n\in D
\end{align}
of order $k$ where $D$ is a regular domain. We strive to find a Lie group of point transformations
\begin{align}\label{transff}
\Phi (n,z_n)=(n, z_n +\varepsilon \zeta(n,z_n)).
\end{align}
Observe that $\varepsilon$ is the group parameter and $\zeta$ is the characteristic of the group of transformations. Let
\begin{align}
G=  Q(n,z_{n})\frac{\partial}{ \partial z_n} + Q(n+1,z_{n+1})\frac{\partial}{ \partial z_{n+1}} +\dots + Q(n+k-1,z_{n+k-1})\frac{\partial}{ \partial z_{n+k-1}} 
\end{align}
be the prolonged generator admitted by the group of point transformations \eqref{transff}. Then the invariance condition reads
\begin{align}\label{LSC}
\begin{split}
&\zeta(n+k,z_{n+k})-\frac{\partial \mathcal{A}}{\partial z_{n+k-1}} \zeta(n+k-1,z_{n+k-1})- \dots - \frac{\partial \mathcal{A}}{ \partial z_n}\zeta(n,z_n)=0
\end{split}
\end{align}
subject to (\ref{general2}). Symmetries are powerful tools for reduction of order of differential and difference equations. In this paper, the reduction of order will be achieved using symmetries and the well-known  canonical coordinate \cite{Maeda}.
\begin{align}
S_n=\int \frac{dz_n}{\zeta(n,z_n)}.
\end{align}
\begin{definition}
	The equilibrium point ${z}$ of \eqref{general2} is stable (locally) if 
	\begin{align}
	\forall \epsilon >0, \exists \; \delta >0\; : \; \sum\limits_{i=0}^{k-1}	|z_i-z|<\delta \implies  
	|z_n-z|<\epsilon  
	\end{align}
\end{definition}
for all  solutions $\{ z_n\}_{n=0}^{\infty}$ of \eqref{general2}. 
\begin{definition}
	The equilibrium point ${z}$ of \eqref{general2} is a global attractor if  $z_n \rightarrow z$, as 	${n \rightarrow \infty } $, 	for any solution  $\{ z_n\}_{n=0}^{\infty}$ of \eqref{general2}.
\end{definition}
\begin{definition}
	The equilibrium point $z$ of \eqref{general2} is globally asymptotically stable if $z$ is locally stable and it is a global attractor of \eqref{general2}.
\end{definition}
\noindent The polynomial 
\begin{align}\label{ceq}
\lambda^{k}-p_{k-1}\lambda^{k-1}-p_{k-2}\lambda^{k-2}-\dots-p_{1}\lambda^1-p_0=0
\end{align}
where
\begin{align}
p_i =\frac{\partial \mathcal{A}}{\partial z_{n+i}}(z, z,\dots, z, z) 
\end{align}
is referred to as the characteristic equation of  \eqref{general2} near $z$.
\begin{theorem}\label{theo2}
	Suppose $\mathcal{A}$ is a smooth function defined on some neighborhood of $z$. Then,
	\begin{itemize}
		\item[(i)] If all the roots, $\lambda_i$,  of \eqref{ceq} are such that $|\lambda_i|<1$, then the equilibrium point $\bar{x}$ is locally asymptotically stable.
		\item[(ii)] If at least one root of \eqref{ceq} has absolute value greater than one, then the equilibrium point $\bar{x}$ of \eqref{ceq} is unstable.
	\end{itemize}
\end{theorem}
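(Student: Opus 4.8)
The plan is to recast the scalar $k$th-order recurrence \eqref{general2} as a first-order system on $\mathbb{R}^{k}$ and then run the classical linearisation argument for fixed points of maps. Concretely, I would put $Y_{n}=(z_{n},z_{n+1},\dots,z_{n+k-1})^{T}$, so that \eqref{general2} becomes $Y_{n+1}=F(Y_{n})$ with
\begin{equation*}
F(u_{1},\dots,u_{k})=\bigl(u_{2},\,u_{3},\,\dots,\,u_{k},\ \mathcal{A}(n,u_{1},\dots,u_{k})\bigr)^{T},
\end{equation*}
and note that the equilibrium $z$ corresponds to the fixed point $\bar{Y}=(z,\dots,z)^{T}$, while smoothness of $\mathcal{A}$ makes $F$ of class $C^{1}$ near $\bar{Y}$ (here one assumes, as in the autonomous case of interest, that the linearisation at $z$ does not depend on $n$). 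A routine cofactor expansion shows that the Jacobian $J=DF(\bar{Y})$ is the companion matrix with bottom row $(p_{0},p_{1},\dots,p_{k-1})$, so $\det(\lambda I-J)$ is exactly the left-hand side of \eqref{ceq}; thus the $\lambda_{i}$ are precisely the eigenvalues of $J$, which is the bridge between the hypotheses and the spectrum.

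For (i), assume $|\lambda_{i}|<1$ for all $i$, i.e.\ the spectral radius satisfies $\rho(J)<1$, and fix $\sigma\in(\rho(J),1)$. Bringing $J$ to Jordan form and rescaling the basis produces an adapted norm $\|\cdot\|_{*}$ on $\mathbb{R}^{k}$ whose induced operator norm obeys $\|J\|_{*}\le\sigma$. Writing the Taylor expansion $F(Y)-\bar{Y}=J(Y-\bar{Y})+R(Y)$ with $\|R(Y)\|_{*}=o(\|Y-\bar{Y}\|_{*})$ as $Y\to\bar{Y}$, I would choose $\delta_{0}>0$ so that $\|R(Y)\|_{*}\le\tfrac{1-\sigma}{2}\|Y-\bar{Y}\|_{*}$ on the ball $\|Y-\bar{Y}\|_{*}\le\delta_{0}$; then $\|F(Y)-\bar{Y}\|_{*}\le c\,\|Y-\bar{Y}\|_{*}$ there with $c=\tfrac{1+\sigma}{2}<1$, so the ball is forward invariant and $\|Y_{n}-\bar{Y}\|_{*}\le c^{\,n}\|Y_{0}-\bar{Y}\|_{*}\to0$. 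Passing back through equivalence of norms on $\mathbb{R}^{k}$, together with $|z_{n}-z|\le\|Y_{n}-\bar{Y}\|$ and $\|Y_{0}-\bar{Y}\|\le C\sum_{i=0}^{k-1}|z_{i}-z|$, yields both the $\epsilon$--$\delta$ stability of the definition above and $z_{n}\to z$, i.e.\ local asymptotic stability.

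For (ii), suppose some $\lambda_{j}$ has $|\lambda_{j}|>1$ and split $\mathbb{R}^{k}=E^{u}\oplus E^{s}$ into the nonzero generalised eigenspace $E^{u}$ of the eigenvalues of modulus $>1$ and the complementary $J$-invariant subspace, with projections $P^{u},P^{s}$. In an adapted norm one has $\|Jv\|\ge\mu\|v\|$ for $v\in E^{u}$ with $\mu>1$, and $\|J|_{E^{s}}\|\le\nu$ with $\nu<\mu$. Introducing the narrow cone $\mathcal{C}=\{\,Y:\ \|P^{s}(Y-\bar{Y})\|\le\alpha\|P^{u}(Y-\bar{Y})\|\,\}$, I would use the $C^{1}$-smallness of $R$ near $\bar{Y}$ to show that, for a suitable $\alpha>0$ and small radius $r>0$, $F$ maps $\mathcal{C}\cap B(\bar{Y},r)$ into $\mathcal{C}$ and strictly expands the $E^{u}$-component, $\|P^{u}(F(Y)-\bar{Y})\|\ge\mu'\|P^{u}(Y-\bar{Y})\|$ with $\mu'>1$. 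Hence any orbit starting in $\mathcal{C}$ arbitrarily close to, but different from, $\bar{Y}$ has its $E^{u}$-component growing geometrically until it leaves $B(\bar{Y},r)$, so $\bar{Y}$ — and therefore $z$ — is unstable.

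I expect the companion-matrix identification and the estimates in (i) to be routine; the real obstacle is the cone argument in (ii), where the nonlinear remainder $R$ must be controlled uniformly so that invariance of $\mathcal{C}$ and strict expansion along $E^{u}$ hold on one common neighbourhood of $\bar{Y}$. In the actual write-up I would shortcut all of this by simply invoking the standard linearised-stability theorem for fixed points of $C^{1}$ maps applied to $Y_{n+1}=F(Y_{n})$ (see, e.g., \cite{El3}), which packages precisely statements (i) and (ii).
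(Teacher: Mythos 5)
Your proposal is mathematically sound, but note that the paper does not prove Theorem \ref{theo2} at all: it is quoted as a known result from the cited literature (the remark ``The above definitions and theorems can be found in \cite{hydon0, ladas}'' covers it), so there is no in-paper argument to compare against. What you supply is the standard textbook proof, and it is the right one. The companion-matrix reformulation $Y_{n+1}=F(Y_n)$ correctly identifies the roots of \eqref{ceq} with the eigenvalues of $DF(\bar Y)$; the adapted-norm contraction estimate in (i) is complete as sketched (the little-$o$ bound on $R$ is all you need, and the passage back to the scalar definition of stability via equivalence of norms is handled correctly); and in (ii) you correctly split off the gap condition $\nu<\mu$ rather than demanding $\nu<1$, which matters because eigenvalues of modulus exactly one may sit in $E^{s}$. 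Two small points deserve explicit mention in a write-up: first, the theorem as stated allows $\mathcal{A}$ to depend on $n$ while the linearisation $p_i$ does not, so your parenthetical restriction to the effectively autonomous case should be stated as a hypothesis rather than buried in an aside; second, the cone argument in (ii) only needs $\|R(Y)\|=o(\|Y-\bar Y\|)$, not $C^{1}$-smallness of $R$, so you can weaken that step. Your closing suggestion to simply invoke the linearised-stability theorem for $C^{1}$ maps is in fact exactly what the paper does, except that the appropriate citation is \cite{ladas} (Grove--Ladas), not \cite{El3}.
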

\begin{definition}
	The equilibrium point $z$ of \eqref{general2} is called non-hyperbolic if there exists a root of \eqref{ceq} with absolute value equal to one.
\end{definition}
\begin{theorem}\label{theo3}
	Suppose the $p_i$'s are real numbers satisfying  $$|p_0|+ |p_1|+\dots+|p_{k-1}|<1.$$ Then, the roots of \eqref{ceq} lie inside the open unit disk $|\lambda|<1$.
\end{theorem}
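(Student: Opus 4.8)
The plan is to argue by contradiction using only the triangle inequality; this is the most economical route and avoids any appeal to complex analysis. Suppose, contrary to the claim, that \eqref{ceq} has a root $\lambda$ with $|\lambda|\ge 1$. First I would note that such a $\lambda$ is necessarily nonzero, so dividing the identity $\lambda^{k}=p_{k-1}\lambda^{k-1}+p_{k-2}\lambda^{k-2}+\dots+p_1\lambda+p_0$ through by $\lambda^{k}$ is legitimate and yields
\[
1=\frac{p_{k-1}}{\lambda}+\frac{p_{k-2}}{\lambda^{2}}+\dots+\frac{p_0}{\lambda^{k}}.
\]

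Next I would take absolute values of both sides and apply the triangle inequality, together with the bound $|\lambda|^{-j}\le 1$ for each $j=1,\dots,k$ (which follows from $|\lambda|\ge 1$), to obtain
\[
1\le \frac{|p_{k-1}|}{|\lambda|}+\frac{|p_{k-2}|}{|\lambda|^{2}}+\dots+\frac{|p_0|}{|\lambda|^{k}}\le |p_{k-1}|+|p_{k-2}|+\dots+|p_0|.
\]
By hypothesis the right-hand side is strictly less than $1$, so we arrive at $1<1$, a contradiction. Hence no root of \eqref{ceq} can satisfy $|\lambda|\ge 1$; equivalently, every root lies in the open unit disk $|\lambda|<1$, as claimed.

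As an alternative I could invoke Rouch\'e's theorem: on the circle $|\lambda|=1$ the polynomial $g(\lambda)=-p_{k-1}\lambda^{k-1}-\dots-p_1\lambda-p_0$ satisfies $|g(\lambda)|\le |p_{k-1}|+\dots+|p_0|<1=|\lambda^{k}|$, so $\lambda^{k}$ and $\lambda^{k}+g(\lambda)$ — the latter being precisely the characteristic polynomial of \eqref{ceq} — have the same number of zeros, counted with multiplicity, inside the unit disk, namely $k$; since the characteristic polynomial has degree $k$, all of its zeros lie there. I do not expect any genuine obstacle in this proof: the only points deserving a word of care are the remark that a root with $|\lambda|\ge 1$ is nonzero, which makes the division step valid, and the observation that it is exactly the \emph{strictness} of the hypothesised inequality $|p_0|+\dots+|p_{k-1}|<1$ that converts the chain $1\le 1$ into the required contradiction.
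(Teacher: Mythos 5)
Your proof is correct. The paper itself does not prove Theorem \ref{theo3}; it is stated as a known preliminary result with a citation to the references (Grove--Ladas and Hydon), so there is no in-paper argument to compare against. Your contradiction argument is the standard one and is complete: the observation that a root with $|\lambda|\ge 1$ is nonzero justifies the division, the triangle inequality together with $|\lambda|^{-j}\le 1$ gives $1\le |p_0|+\dots+|p_{k-1}|$, and the strict hypothesis yields the contradiction. The Rouch\'e alternative is also valid but adds nothing here; the elementary route suffices and is preferable in this context.
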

\noindent The above definitions and theorems can be found in \cite{hydon0, ladas}.
\section{Main results}
\subsection{Symmetries}
For the sake of aesthetics, we rewrite the Beverton-holt equation as 
\begin{align}
	z_{n+k}=\frac{z_n}{A_n+B_nz_n},
\end{align}
with 
\begin{align}\label{coef}
	A_n=\frac{1}{\mu_n} \qquad \text{and} \qquad B_n=\frac{\mu_n-1}{K_n \mu_n}.
\end{align}
In the Beverton-Holt model, $\mu_n$ is set to be greater than one. This implies that $A_n$ should be less that one. In this paper, we investigate solutions that are mathematically correct, so without loss of generality, we will will assume that $A_n$ is simply a real number. 
Seeking for Lie symmetries, we force the  criterion of invariance \eqref{LSC} on \eqref{bohner}. This yields
\begin{align}\label{a1}
 &\zeta(n+k,\mathcal{A})-
  {\frac{A_n }{{\left(A_n +B_nz_n\right)}^{2}} } \zeta\left({n, u_{n}}\right)=0.
\end{align}
Solving the functional equation above, we obtain (after a set of lengthy computations) the infinitesimals:
\begin{itemize}
	\item[($i$)]
 \begin{align}\label{a2}
\zeta_1(n, z_n)=&\alpha_n +\frac{B_n}{A_n}\alpha_nz_n 
\end{align}
{ where }  $A_n\alpha_{n+k}-\alpha_n =0$, that is to say,
\begin{align}\label{alpha}
	\alpha_{kn+j}=\alpha_j\left( \prod_{k_1=0}^{n-1}\frac{1}{A_{kk_1+j}}\right), \qquad j=0,\dots, k-1;\end{align} 
	\item[($ii$)] 
\begin{align}\zeta_2(n, z_n)=&\beta_n z_n ^2 
\end{align} 
where $\beta_{n+k} -A_n \beta_n =0$, that is to say,  \begin{align}\label{beta}\beta_{kn+j}=\beta_j\left( \prod_{k_1=0}^{n-1}{A_{kk_1+j}}\right), \qquad j=0,\dots, k-1;\end{align}
\item[($iii$)]
 \begin{align}
\zeta_3(n, z_n)= &\lambda_n z_n + \gamma_n z_n ^2,
\end{align}
{ where }  $\lambda_{n+k}-\lambda_n=0$ \text{ and }  $\gamma_{n+k}-A_n \gamma_n+B_n \lambda_n=0$,  that is to say, \begin{align}\label{lambda} \lambda_n=e^{i\frac{2pn\pi}{k}}\end{align}  and  \begin{align}\label{gamma}\gamma_{kn+j}=\gamma_j\left( \prod_{k_1=0}^{n-1}{A_{kk_1+j}}\right)+\sum_{l=0}^{n-1}B_{kn+j}\lambda_{kn+j}\prod_{k_2=l+1}^{n-1}{A_{kk_1+j}}\end{align} with $p=0,\dots, k-1$ and $j=0,\dots, k-1$.
\end{itemize}
Using these infinitesimals, we obtain the following symmetries:
\begin{align}
X_{1}=&\left( \alpha_n +\frac{B_n}{A_n}\alpha_nz_n \right)\frac{\partial}{\partial z_n}\\X_2=&\beta_n z_n ^2\frac{\partial}{\partial z_n}&\\X_3=&\left( \lambda_n z_n + \gamma_n z_n ^2 \right)\frac{\partial}{\partial z_n}&
\end{align}
where $\alpha_n,\; \beta_n,\; \gamma$ and $\lambda_n$ are given in equations \eqref{alpha}, \eqref{beta}, \eqref{lambda} and \eqref{gamma}, respectively.
\subsection{Canonical coordinate closed form solution}
We select the infinitesimal $\zeta_2$ to lower the order  \eqref{bohner}. Thus, the canonical coordinate takes the form
\begin{align}\label{cano}
S_n =&\int\frac{dz_n}{\zeta_2(n, z_n)}\\=&-\frac{1}{\beta_nz_n}
\end{align}
and therefore
\begin{align}
 S_{n+k}\beta_{n+k}-A_n S_{n}\beta_{n} =& -\frac{1}{z_{n+k}}+\frac{A_n}{z_n}\\
 =&-B_n.
\end{align}
Setting $\tilde{S_n}=-\beta_nS_n$, we find that
\begin{align}\label{tilde}
\tilde{S}_{n+k}= A_n \tilde{S}_n+B_n
\end{align}
and  we take notice of
\begin{equation}\label{inv}
\tilde{S}_n = \frac{1}{z_n}.
\end{equation}
By iterating \eqref{tilde}, we get 
\begin{align}\label{a12}
\tilde{S}_{kn+j}\quad=&\tilde{S}_j \left(   \prod _{k_1=0}^{n-1}A_{kk_1+j}\right) +\sum _{l=0}^{n-1} \left(  B_{kl+j}\prod _{k_2=l+1}^{n-1}A_{kk_2+j}\right),
\end{align}
for $j=0,\dots, k-1$. We reverse the order created by the change of variables to derive the solution of \eqref{bohner}. We observe from \eqref{inv} that
\begin{align*}
z_{n} & = \frac{1}{\tilde{S}_n} 
\end{align*}
from which we obtain
\begin{align}\label{sol}
	z_{kn+j} =& \frac{1}{\tilde{S}_{kn+j}} \nonumber\\
	=&\frac{z_j}{  \left( \prod\limits_{k_1=0}^{n-1}A_{kk_1+j}\right) +z_j\sum\limits _{l=0}^{n-1} \left(  B_{kl+j}\prod\limits _{k_2=l+1}^{n-1}A_{kk_2+j}\right)}, \quad j=0, \dots, k-1.
\end{align}
Recall that $A_n=\frac{1}{\mu_n}$ and $B_n=\frac{\mu_n-1}{K_n \mu_n}$. So,  the closed form solution to the Beverton-Bolt \eqref{bohner} is given by
\begin{align}\label{solmuk}
z_{kn+j} =& \frac{z_j}{  \left( \prod\limits_{k_1=0}^{n-1}\dfrac{1}{\mu_{kk_1+j}}\right) +z_j\sum\limits _{l=0}^{n-1} \left(  \dfrac{\mu_{kl+j}-1}{K_{kl+j} \mu_{kl+j}}\prod\limits _{k_2=l+1}^{n-1}\dfrac{1}{\mu_{kk_2+j}}\right)}, \quad j=0, \dots, k-1.
\end{align}
provided the denominators are non-zero.
\subsection{Periodicity in the growth rate and the carrying capacity} 
The goal of the next section is to investigate the form of the solution when the growth rate and the carrying capacity are 1 or $k$- periodic sequences.
\subsubsection{The case when $\mu_n$ and $K_n$ are $k$-periodic}
\noindent In this case, invoking \eqref{coef}, we have that $A_{k+j}= A_{j}$ and $B_{k+j} = B_{j}$ for all $j$. This reduces \eqref{sol} into (after using the formula for a geometric progression)
\begin{align}\label{solmuknot1}
z_{kn+j} =&\frac{z_j}{  {A_{j}}^n +z_jB_j\left( \frac{1-A_j^n}{1-A_j}\right)}\nonumber\\
=& \frac{(1-A_j)z_j}{ (1-A_j) {A_{j}}^n + B_j\left( {1-{A_j}^n}\right)z_j}, \quad j=0, \dots, k-1
\end{align}
when $A_j\neq1$ and $B_j\neq1$.
\subsubsection{The case when $\mu_n$ and $K_n$ are 1-periodic (constant)}
\noindent Here, thanks to \eqref{coef}, we may assume that $(A_{n})= (A, A, \dots)$ and $B_{n} = (B, B, \dots)$. 
 \begin{itemize}
\item[(i)] When $A = 1$, the solution given in equation \eqref{sol} becomes 
\begin{align}\label{solmuk1c}
z_{kn+j} =&\frac{z_j}{ 1+nBz_j}, \quad j=0, \dots, k-1.
\end{align}
 \item[(ii)] {When $A \neq 1$}, the formula solution \eqref{solmuk} reduces to
 \begin{align}\label{solmuknot1c}
 z_{kn+j} =&\frac{z_j}{  {A}^n +z_jB\left( \frac{1-A^n}{1-A}\right)}, \quad j=0, \dots, k-1.
 \end{align}
\end{itemize}
\subsection{Periodicity in the solution and analysis of the stability of the equilibrium points}
In this section, we investigate the existence of  periodic solutions and the nature of the equilibrium points of the model under study. We demonstrate that periodic solutions do exist under certain restrictions.
\begin{theorem}\label{1s}
	The solution $z_n$ of
	\begin{equation}\label{eq1}
	z_{n+k}=\frac{z_n}{A_n+B_nz_n},
	\end{equation}
	where $A_n\neq 1$, is $k$-periodic if and only  the following conditions are met:
	\begin{itemize}
		\item[(i)]  The sequence $A_n$ and $B_n$ periodic with period $k$.
		\item [(ii)] The initial conditions, $x_j, \; j=0,\dots, k-1$, satisfy $z_j=\frac{1-A_j}{B_j}$.
	\end{itemize}
\end{theorem}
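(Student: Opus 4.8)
The plan is to base everything on the closed-form solution \eqref{sol}, which writes $z_{kn+j}$ as $z_j$ divided by an explicit quantity $P_n=P_n(z_j,A,B)$; by definition, $k$-periodicity of $\{z_n\}$ means $z_{kn+j}=z_j$ for every $n\ge 0$ and every $j\in\{0,\dots,k-1\}$, so the whole question reduces to deciding when $P_n=1$ identically in $n$.

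For the sufficiency (``if'') direction I would assume (i) and (ii). Under (i) the coefficients are $k$-periodic, the sum defining $P_n$ is geometric, and \eqref{sol} collapses, exactly as in the derivation of \eqref{solmuknot1}, to
\begin{align*}
z_{kn+j}=\frac{(1-A_j)z_j}{(1-A_j)A_j^{\,n}+B_j\bigl(1-A_j^{\,n}\bigr)z_j}.
\end{align*}
Substituting (ii) in the form $B_jz_j=1-A_j$ turns the denominator into $(1-A_j)A_j^{\,n}+(1-A_j)(1-A_j^{\,n})=1-A_j$, so $z_{kn+j}=z_j$ for all $n$. Since $A_j\neq 1$ this division is legitimate and every denominator along the orbit is nonzero, so $\{z_n\}$ is genuinely $k$-periodic.

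For the necessity (``only if'') direction I would work straight from \eqref{eq1}. The orbit of a positive initial vector never vanishes, so $k$-periodicity gives $z_{n+k}=z_n\neq 0$; dividing \eqref{eq1} by $z_n$ gives $A_n+B_nz_n=1$, i.e. $z_n=\tfrac{1-A_n}{B_n}$ for every $n$. Evaluated at $n=j$ with $0\le j\le k-1$ this is exactly (ii). Towards (i), combining $z_n=\tfrac{1-A_n}{B_n}$ with $z_{n+k}=z_n$ gives $\tfrac{1-A_{n+k}}{B_{n+k}}=\tfrac{1-A_n}{B_n}$ for all $n$; equivalently, feeding $P_n\equiv1$ into the one-step recursion $P_{n+1}=A_{kn+j}P_n+z_jB_{kn+j}$, which follows from \eqref{tilde}, yields $A_{kn+j}+B_{kn+j}z_j=1$ for every $n$.

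I expect the genuine obstacle to be this last step: extracting that $A_n$ and $B_n$ are \emph{separately} $k$-periodic from the single scalar constraint $\tfrac{1-A_n}{B_n}=z_n$ being $k$-periodic. That implication does not follow from the solution formula alone, so the argument must lean on the standing hypothesis of the forced model that $\mu_n$ and $K_n$ (hence $A_n$ and $B_n$) are already periodic and then exclude a shorter forced period, or else (i) should be read as the structural setting within which (ii) characterizes the $k$-periodic orbits. I would state whichever reading is intended explicitly; with that in place, the proof amounts to the short computation above together with the elementary nonvanishing remark, run in both directions.
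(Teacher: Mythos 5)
Your sufficiency argument coincides with the paper's entire proof: substitute $B_jz_j=1-A_j$ into \eqref{solmuknot1} and the denominator collapses to $1-A_j$, giving $z_{kn+j}=z_j$. Where you genuinely differ is that the paper stops there --- it never addresses the ``only if'' direction at all. Your necessity argument for (ii) (divide \eqref{eq1} by $z_n\neq 0$ to get $A_n+B_nz_n=1$, hence $z_n=\frac{1-A_n}{B_n}$ for every $n$, which at $n=j$ is exactly (ii)) is correct and is a genuine addition, modulo the small remark that the identically zero solution is $k$-periodic without satisfying (ii), so nonvanishing of the orbit (guaranteed for positive initial data) must indeed be invoked as you do. Your closing observation is also the right one: $k$-periodicity of $z_n$ only forces the single ratio $\frac{1-A_n}{B_n}$ to be $k$-periodic, not $A_n$ and $B_n$ separately (for instance $A_n=0,\ B_n=1$ and $A_{n+k}=\tfrac12,\ B_{n+k}=\tfrac12$ produce the same ratio), so condition (i) cannot be deduced from periodicity of the solution and must be read as a standing hypothesis --- a reading the paper adopts silently by working only inside formula \eqref{solmuknot1}, which already presupposes $k$-periodic coefficients. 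The one point in the paper's proof you do not reproduce is its further claim that the minimal period cannot be less than $k$; that claim is inessential to the statement as written and is itself doubtful when several of the values $\frac{1-A_j}{B_j}$ coincide. In short, your proposal covers everything the paper proves, supplies the half of the equivalence the paper omits, and correctly isolates the one clause of the theorem that cannot be proved as stated.
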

\begin{proof}
	Suppose the initial conditions satisfy the conditions $z_j=\frac{1-A_j}{B_j}$. Using the latter in \eqref{solmuknot1}, we have that:
\begin{align}
	z_{kn+j} =&\frac{z_j}{  {A_{j}}^n +\frac{1-A_j}{B_j}B_j\left( \frac{1-A_j^n}{1-A_j}\right)}\nonumber\\
	=& z_j, \quad j=0, \dots, k-1.
\end{align}	
It follows $z_n$ is periodic with period divisible by $k$. With the choice of the restriction on the initial conditions, the period can not be less that $k$. Therefore, $z_n$ is periodic with period $k$.
\end{proof}
\begin{figure}[H]
	\begin{minipage}{0.45\textwidth}
			\centering
			\hspace{-1.3cm}	\includegraphics[scale=0.25]{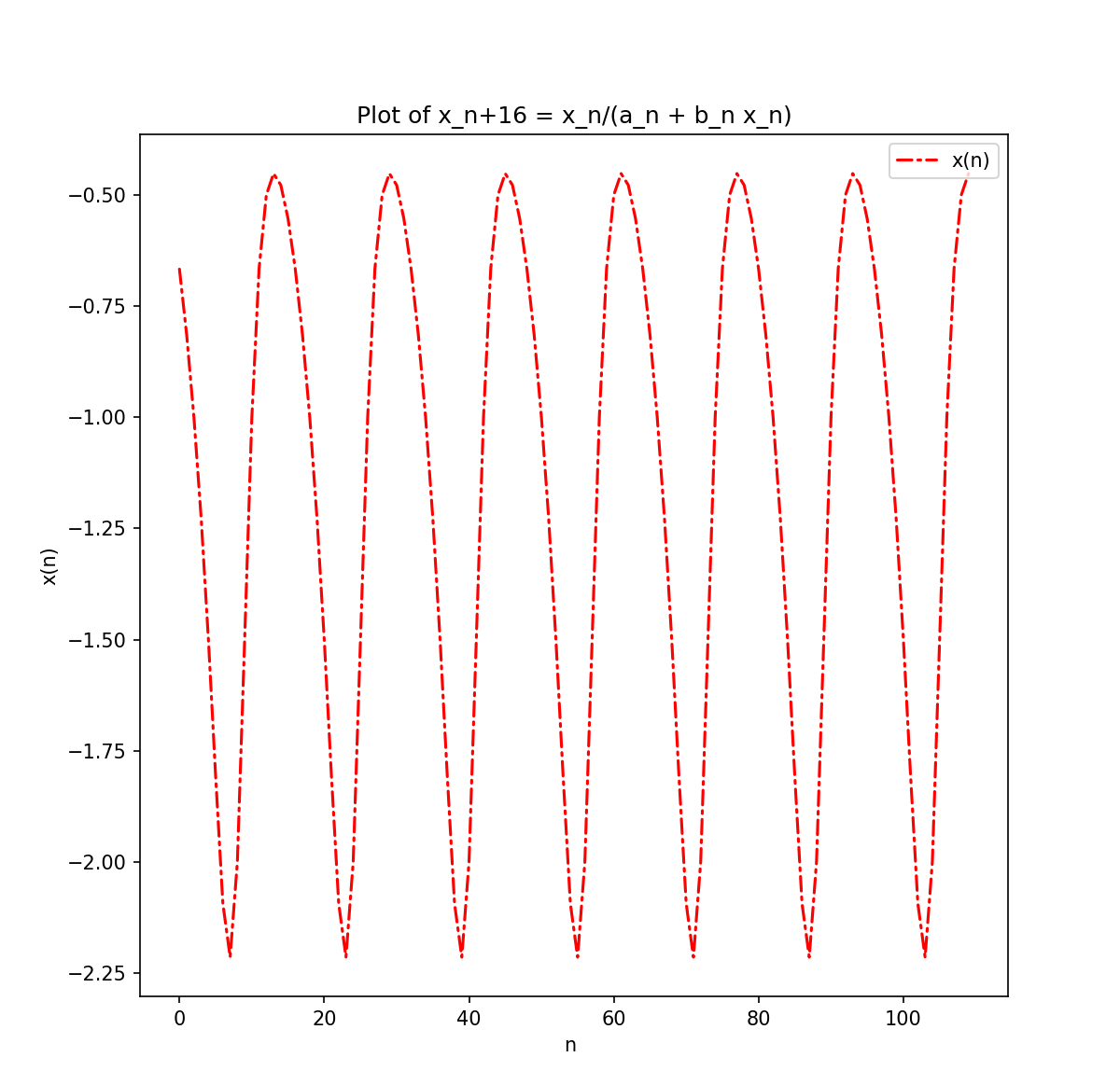}
			\caption{\scriptsize Graph of 
				$z_{n+16} = \dfrac{z_n}{(3+\sin(n\pi/8)+(2+\cos(n\pi/8))z_n}$
				.\label{theo1a}}
	\end{minipage}\hspace{0.5cm}
	\begin{minipage}{0.45\textwidth}
		\centering
		\includegraphics[scale=0.25]{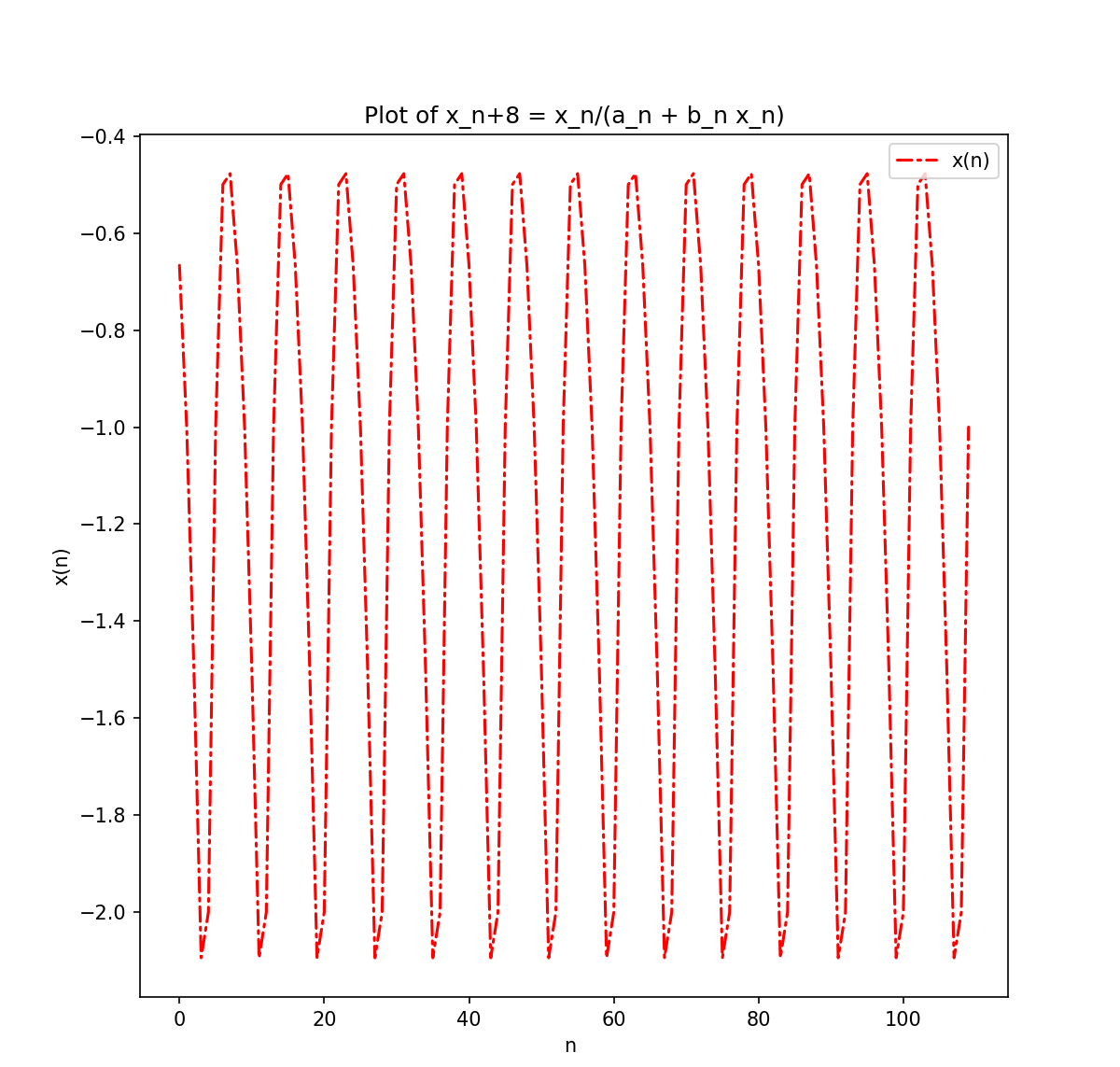}
		\caption{\scriptsize Graph of 
				$z_{n+8} = \dfrac{z_n}{(3+\sin(n\pi/4))+(2+\cos(n\pi/4))z_n}$.\label{theo1b}}
	\end{minipage}
\end{figure} 
\noindent In Figure \ref{theo1a}, we used the initial conditions $$z_j = \frac{-2-\sin(\frac{j\pi}{8})}{2+\cos(\frac{j\pi}{8})},$$ $j=0, \dots, 15$. These initial conditions satisfy the conditions in Theorem \ref{1s}. As predicted, we have $16$-periodic solutions.\par 
\noindent In Figure \ref{theo1b}, we used the initial conditions $$z_j = \frac{-2-\sin(\frac{j\pi}{4})}{2+\cos(\frac{j\pi}{4})},$$ $j=0, \dots, 7$. These initial conditions satisfy the conditions in Theorem \ref{1s}. As predicted, we have $8$-periodic solutions.
\begin{theorem}\label{c1}
	The solution $z_n$ of
	\begin{equation}\label{eq1cst}
	z_{n+k}=\frac{z_n}{A+Bz_n},
	\end{equation}
	where $A\neq 1$, is $1$-periodic if and only the initial conditions, $x_j, \; j=0,\dots, k-1$, satisfy $z_j=\frac{1-A}{B}$.
\end{theorem}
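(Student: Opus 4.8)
The assertion is the autonomous specialization of Theorem \ref{1s} (take $A_n\equiv A$, $B_n\equiv B$), strengthened from $k$-periodicity to genuine $1$-periodicity. I would prove the two implications separately, working directly from the explicit solution \eqref{solmuknot1c}. As in the Beverton--Holt setting one assumes $B\neq 0$ and that the initial data $z_0,\dots,z_{k-1}$ are nonzero; otherwise $\tfrac{1-A}{B}$ is meaningless, or the constant zero solution supplies a spurious counterexample to the ``only if'' part.

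\emph{Sufficiency.} Put $z_j=\tfrac{1-A}{B}$ for every $j$ and substitute into \eqref{solmuknot1c}. The denominator collapses, since $A^{n}+z_jB\bigl(\tfrac{1-A^{n}}{1-A}\bigr)=A^{n}+(1-A)\cdot\tfrac{1-A^{n}}{1-A}=A^{n}+(1-A^{n})=1$, which is in particular nonzero; hence the solution is globally defined and $z_{kn+j}=z_j=\tfrac{1-A}{B}$ for all $n\ge 0$ and all $j\in\{0,\dots,k-1\}$. Thus $\{z_n\}$ is the constant sequence of value $\tfrac{1-A}{B}$, i.e.\ $1$-periodic. (This computation is also what shows that the $k$-periodic solution furnished by Theorem \ref{1s} is in fact $1$-periodic once the coefficients are constant.)

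\emph{Necessity.} Suppose $\{z_n\}$ is $1$-periodic, i.e.\ constant; since the solution is defined, $A+Bz_j\neq 0$ for each $j$. Then $z_{k+j}=z_j$ for $j=0,\dots,k-1$, while evaluating \eqref{eq1cst} at $n=j$ gives $z_{k+j}=\tfrac{z_j}{A+Bz_j}$. Combining these, $z_j(A+Bz_j)=z_j$, and cancelling the nonzero factor $z_j$ yields $A+Bz_j=1$, that is, $z_j=\tfrac{1-A}{B}$. I do not expect a genuine obstacle here: given \eqref{solmuknot1c} the argument is immediate. The only points that need a line of care are checking that the prescribed initial data keep the denominator in \eqref{solmuknot1c} from vanishing (it equals $1$), and explicitly excluding the degenerate cases $B=0$ and $z_j\equiv 0$ — which is exactly the purpose of the positivity hypothesis inherited from the model.
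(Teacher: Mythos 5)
Your proposal is correct, and it is actually more complete than what the paper provides: the paper simply declares the proof ``identical to the proof of Theorem \ref{1s}'' and omits it, and the proof of Theorem \ref{1s} itself only establishes the \emph{sufficiency} direction (substitute $z_j=\tfrac{1-A_j}{B_j}$ into the closed form, watch the denominator collapse to $1$), followed by a brief and somewhat hand-wavy remark about the period not being smaller than $k$. Your sufficiency argument is exactly the paper's computation specialized to constant coefficients, so there you coincide. Where you genuinely add value is the necessity direction: rather than trying to extract it from the closed-form solution \eqref{solmuknot1c}, you argue directly from the recurrence --- if the sequence is constant then $z_{k+j}=\tfrac{z_j}{A+Bz_j}=z_j$ forces $A+Bz_j=1$ after cancelling $z_j\neq 0$ --- which is the cleanest possible route and fills a gap the paper leaves open (an ``if and only if'' statement with only one implication proved). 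Your explicit flagging of the degenerate cases $B=0$ and $z_j=0$ (the constant zero solution is $1$-periodic but does not satisfy $z_j=\tfrac{1-A}{B}$) is also a point the paper silently sweeps under the positivity assumptions of the model; making it explicit is the right call.
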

\begin{proof}
The proof is identical to the proof of Theorem \ref{1s} and is omitted.
\end{proof}
\begin{theorem}\label{2s}
	The solution $z_n$ of
	\begin{equation}\label{eq1s}
	z_{n+k}=\frac{z_n}{-1+Bz_n},
	\end{equation}
	is $2k$ periodic and contains two cycles of length $k$.
\end{theorem}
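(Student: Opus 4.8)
The plan is to specialise the closed-form solution already derived in \eqref{solmuknot1c} to the present case, namely $A=-1$ with $B$ constant (this is legitimate since $A=-1\neq 1$), and then simply read off the periodic structure.

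First I would substitute $A=-1$ into \eqref{solmuknot1c}. Since $A^{n}=(-1)^{n}$ and $\dfrac{1-A^{n}}{1-A}=\dfrac{1-(-1)^{n}}{2}$, the denominator collapses according to the parity of $n$: when $n$ is even it equals $1$, so $z_{kn+j}=z_{j}$; when $n$ is odd it equals $-1+Bz_{j}$, so $z_{kn+j}=\dfrac{z_{j}}{Bz_{j}-1}$. Introducing $f(w)=\dfrac{w}{Bw-1}$, this says that for each fixed $j\in\{0,\dots,k-1\}$ the subsequence $(z_{kn+j})_{n\ge 0}$ is exactly $z_{j},\,f(z_{j}),\,z_{j},\,f(z_{j}),\dots$, which is consistent with the elementary fact that $f$ is an involution, $f(f(w))=w$.

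From $z_{k(n+2)+j}=z_{kn+j}$ for every $j$ and every $n$ it follows at once that $z_{m+2k}=z_{m}$ for all $m$, i.e. $\{z_{n}\}$ is $2k$-periodic. (Alternatively one can bypass the closed form entirely and iterate \eqref{eq1s} twice, obtaining $z_{n+2k}=f(f(z_{n}))=z_{n}$ directly.) To exhibit the two cycles of length $k$, I would display one full period of the orbit as the concatenation of the block $(z_{0},z_{1},\dots,z_{k-1})$ followed by the block $(f(z_{0}),f(z_{1}),\dots,f(z_{k-1}))$, after which these two blocks keep alternating; these are the announced two $k$-cycles. They coincide only when $z_{j}=2/B$ for every $j$, i.e. when the whole solution is the equilibrium $2/B$, so for any other admissible initial data the minimal period is precisely $2k$.

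The only point requiring care is bookkeeping: one must keep the standing assumption $Bz_{j}\neq 1$ for all $j$ so that no denominator in \eqref{sol} vanishes (this is exactly the ``provided the denominators are non-zero'' proviso attached to the solution formula); a one-line check shows that $Bf(z_{j})=1$ is then automatically impossible, so the recurrence never breaks down on the orbit. Beyond that, the argument is a direct substitution into an already-established formula, so I do not anticipate any genuine obstacle.
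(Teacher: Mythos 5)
Your proposal is correct and follows essentially the same route as the paper: substitute $A=-1$ into the closed form \eqref{solmuknot1c}, observe that the denominator is $1$ for even $n$ and $-1+Bz_j$ for odd $n$, conclude $z_{2kn+j}=z_j$, and display the two alternating $k$-blocks. Your additional observations (that $w\mapsto w/(Bw-1)$ is an involution, the minimality of the period $2k$ for non-equilibrium data, and the check that denominators never vanish along the orbit) go slightly beyond what the paper records but do not change the argument.
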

\begin{proof}
We recall that the solution of the model for constant coefficients $A_n$ and $B_n$ is given in \eqref{solmuknot1c}. Setting $A=-1$ in \eqref{solmuknot1c}, we get:
\begin{align}
	z_{kn+j} =&\frac{z_j}{  {(-1)}^n +z_jB\left( \frac{1-(-1)^n}{1+1}\right)}, \quad j=0, \dots, k-1.
\end{align}
It follows that
\begin{align}
	z_{2kn+j} =&\frac{z_j}{  {(-1)}^{2n} +z_jB\left( \frac{1-(-1)^{2n}}{1+1}\right)}, \quad j=0, \dots, k-1\\
	=z_j.
\end{align}
In other words,
$$\left\{\dots, z_0, z_1,  \dots,z_{k-1}, \frac{z_0}{-1+z_0B}, \frac{z_1}{-1+z_1B}, \dots, \frac{z_{k-1}}{-1+z_{k-1}B}, z_0, z_1, \dots\right\}$$
are the $2k$ periodic solutions with the two cycles of length $k$.
\end{proof}
\begin{figure}[H]
	\begin{minipage}{0.45\textwidth}
			\centering
			\hspace{-1.3cm}	\includegraphics[scale=0.25]{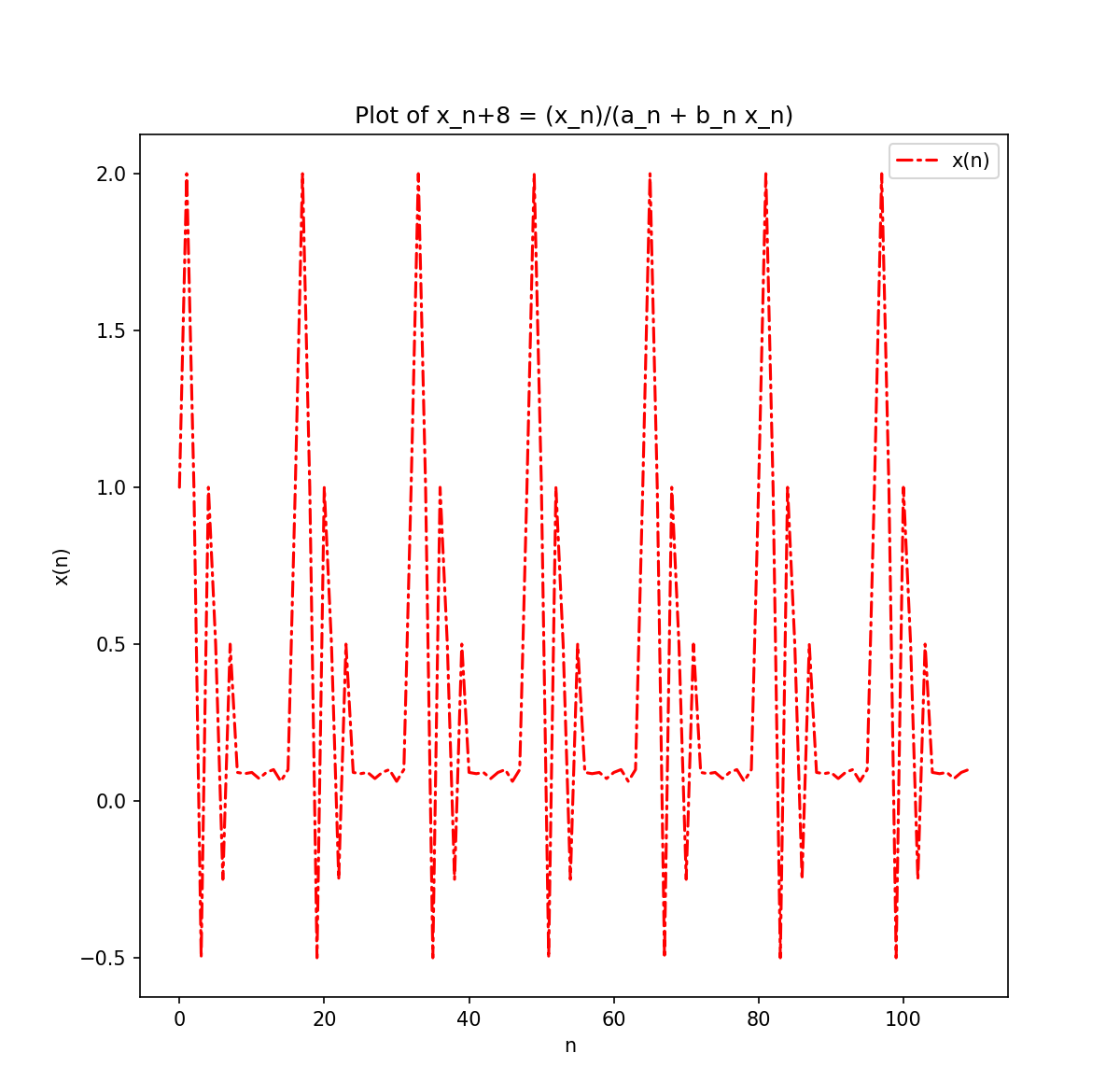}
			\caption{\scriptsize Graph of 
				$z_{n+8} = \dfrac{z_n}{-1+12z_n}$
				.\label{theo3a}}
	\end{minipage}\hspace{0.5cm}
	\begin{minipage}{0.45\textwidth}
		\centering
		\includegraphics[scale=0.25]{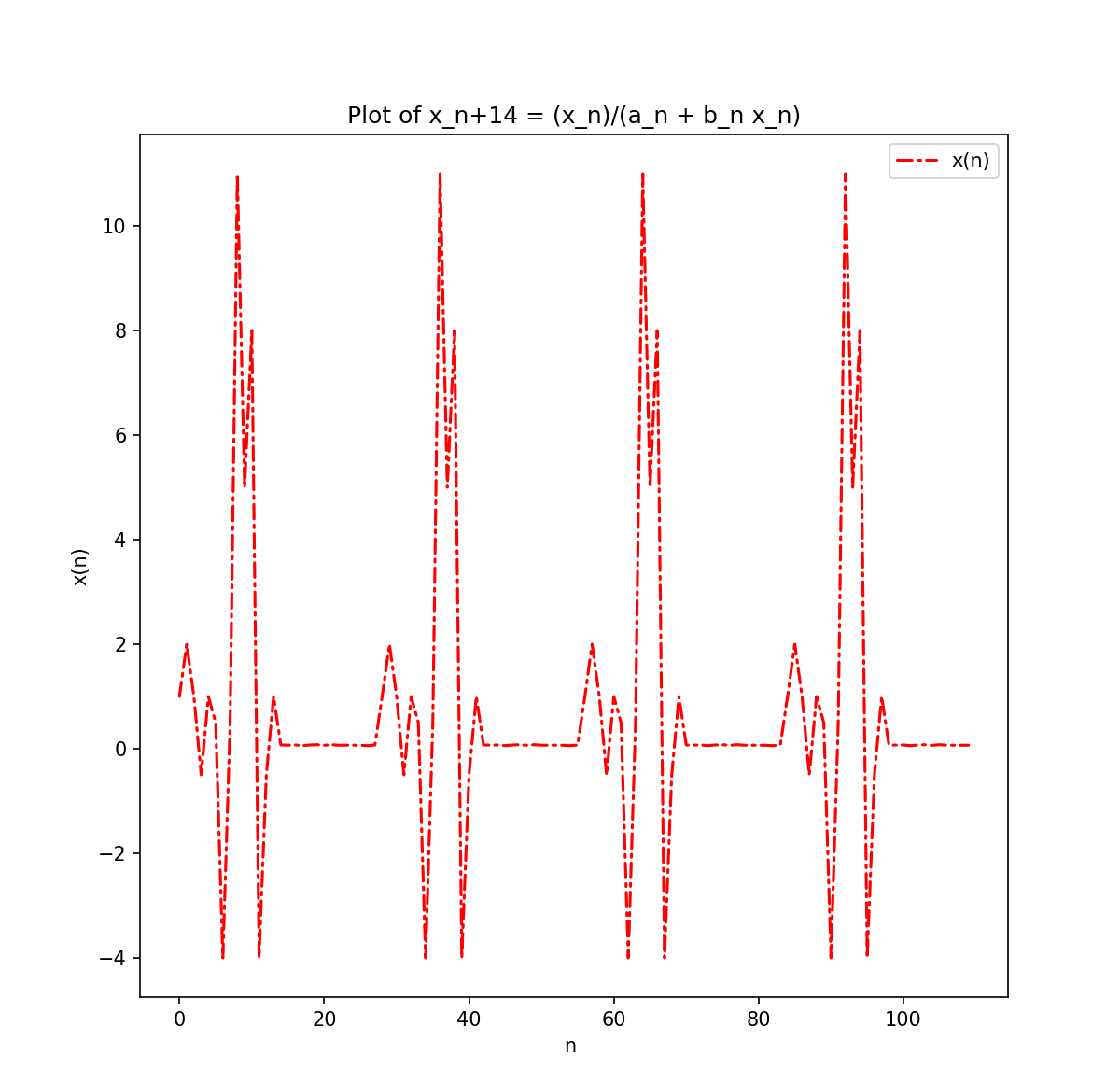}
		\caption{\scriptsize Graph of 
			$z_{n+14} = \dfrac{z_n}{-1+15z_n}$.\label{theo3b}}
	\end{minipage}
\end{figure} 
\noindent In Figure \ref{theo3a}, we used the initial conditions $z_0 = 1; z_1 = 2; z_2 = 1; z_3 = -1/2; z_4 = 1;
z_5 = 1/2; z_6 = -1/4; z_7 = 1/2$.  As predicted, we have $16$-periodic solutions.\par 
\noindent In Figure \ref{theo3b}, we used the initial conditions $z_0 = 1; z_1 = 2; z_2 = 1; z_3 = -1/2; z_4 = 1;
z_5 = 1/2; z_6 = -4; z_7 = 1/2; z_8 = 11; z_9= 5
z_{10} = 8; z_{11} = -4; z_{12} = -1/2; z_{13} = 1$. As predicted, we have $28$-periodic solutions.
\begin{theorem}
If $A\neq 1$, then the equilibrium point $\bar{z}=0$ of \eqref{bohner} is asymptotically stable when $|A|> 1$ and unstable when $|A|< 1$. Moreover, the non zero equilibrium point $z=(1-A)/B$ of \eqref{bohner} is asymptotically stable when $|A|< 1$ and unstable when $|A|> 1$.   
\end{theorem}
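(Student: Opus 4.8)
The plan is to locate the two equilibria of the constant-coefficient equation \eqref{eq1cst} and then read off stability from the linearized criterion, Theorem~\ref{theo2}. Here the right-hand side is $z_{n+k}=\mathcal{A}(z_n)$ with $\mathcal{A}(z_n)=z_n/(A+Bz_n)$, a function of $z_n$ alone. I would first solve $z=z/(A+Bz)$, i.e. $z(A+Bz-1)=0$, which gives exactly $\bar z=0$ and $\bar z=(1-A)/B$; these are genuinely distinct because $A\neq1$ (and $B\neq0$, since $B=(\mu-1)/(K\mu)$).

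Next I would assemble the characteristic equation \eqref{ceq} at each equilibrium. Because $\mathcal{A}$ does not involve $z_{n+1},\dots,z_{n+k-1}$, all coefficients $p_i=\partial\mathcal{A}/\partial z_{n+i}$ with $1\le i\le k-1$ vanish, whereas
\begin{align*}
p_0=\frac{\partial\mathcal{A}}{\partial z_n}=\frac{A}{(A+Bz_n)^2}.
\end{align*}
Thus \eqref{ceq} collapses to $\lambda^{k}-p_0=0$, and all $k$ of its roots share the common modulus $|p_0|^{1/k}$. Evaluating at $\bar z=0$ gives $A+Bz=A$, hence $p_0=1/A$ and every root has modulus $|A|^{-1/k}$; evaluating at $\bar z=(1-A)/B$ gives $A+Bz=1$, hence $p_0=A$ and every root has modulus $|A|^{1/k}$.

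Finally I would apply Theorem~\ref{theo2}. For $\bar z=0$: $|A|^{-1/k}<1\iff|A|>1$, which by part~(i) yields local asymptotic stability; while if $|A|<1$ then $|A|^{-1/k}>1$, so all (in particular one) roots lie outside the unit disc and part~(ii) gives instability. For $\bar z=(1-A)/B$: $|A|^{1/k}<1\iff|A|<1$, giving asymptotic stability by part~(i), and $|A|>1$ forces a root of modulus $>1$, giving instability by part~(ii). Assembling these four implications is exactly the assertion of the theorem. The argument is essentially a routine linearization; the only mild point to flag is that Theorem~\ref{theo2} says nothing at the non-hyperbolic threshold $|A|=1$, but that case is excluded by hypothesis, and the fact that $\lambda^k=p_0$ has all roots of equal modulus $|p_0|^{1/k}$ is what makes both alternatives of Theorem~\ref{theo2} apply cleanly. (Alternatively, the two stable cases follow at once from Theorem~\ref{theo3}, since then $|p_0|<1$.)
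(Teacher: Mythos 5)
Your proposal is correct and follows essentially the same route as the paper: identify the equilibria from $z(A+Bz-1)=0$, compute $p_0=\partial\mathcal{A}/\partial z_n$ at each (obtaining $1/A$ at $\bar z=0$ and $A$ at $\bar z=(1-A)/B$), and apply Theorem~\ref{theo2} to the characteristic equation $\lambda^k-p_0=0$. Your added observations (that the higher $p_i$ vanish, that all roots share modulus $|p_0|^{1/k}$, and the alternative appeal to Theorem~\ref{theo3}) only make the same argument more explicit.
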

\begin{proof}
	The equilibrium points of \eqref{bohner} are the solutions of the equation  $\bar{z}(A+B\bar{z}-1)=0$. If we let    \begin{align}
		z_{n+k}=	f( z_n)=	\frac{z_n}{A+Bz_n},\end{align}
			 then:
	\begin{itemize}
		\item[-] For $\bar{z}=0$, it is easy to see that $	f_{,z_n}(0)=1/A$ and so, the characteristic equation of \eqref{bohner} around this equilibrium point  is given by $\lambda ^{k}-\frac{1}{A}=0.$ It follows that $|\lambda _i|< 1$ when $|A|> 1$, in order words, $\bar{z}=0$ is locally asymptotically stable. On the other hand, $|\lambda _i |> 1$ when $|A|< 1$ or in order words, $\bar{z}=0$ unstable.
		\item[-] For $\bar{z}=(1-A)/B$,  $f_{,z_n}(0)=A$ and  the characteristic equation of \eqref{bohner} around this equilibrium point is $\lambda ^{k}-A=0$. Consequently, when $|A|< 1$, the roots $\lambda_i$'s of this characteristic equation are such that $\lambda_i|<1$ and therefore the equilibrium is asymptotically stable in this case. Similarly,  when $|A|> 1$, the moduli of the roots are greater than one and therefore the equilibrium is unstable in this case.
	\end{itemize}
\end{proof}
\begin{figure}[H]
	\begin{minipage}{0.45\textwidth}
			\centering
			\hspace{-1.3cm}	\includegraphics[scale=0.25]{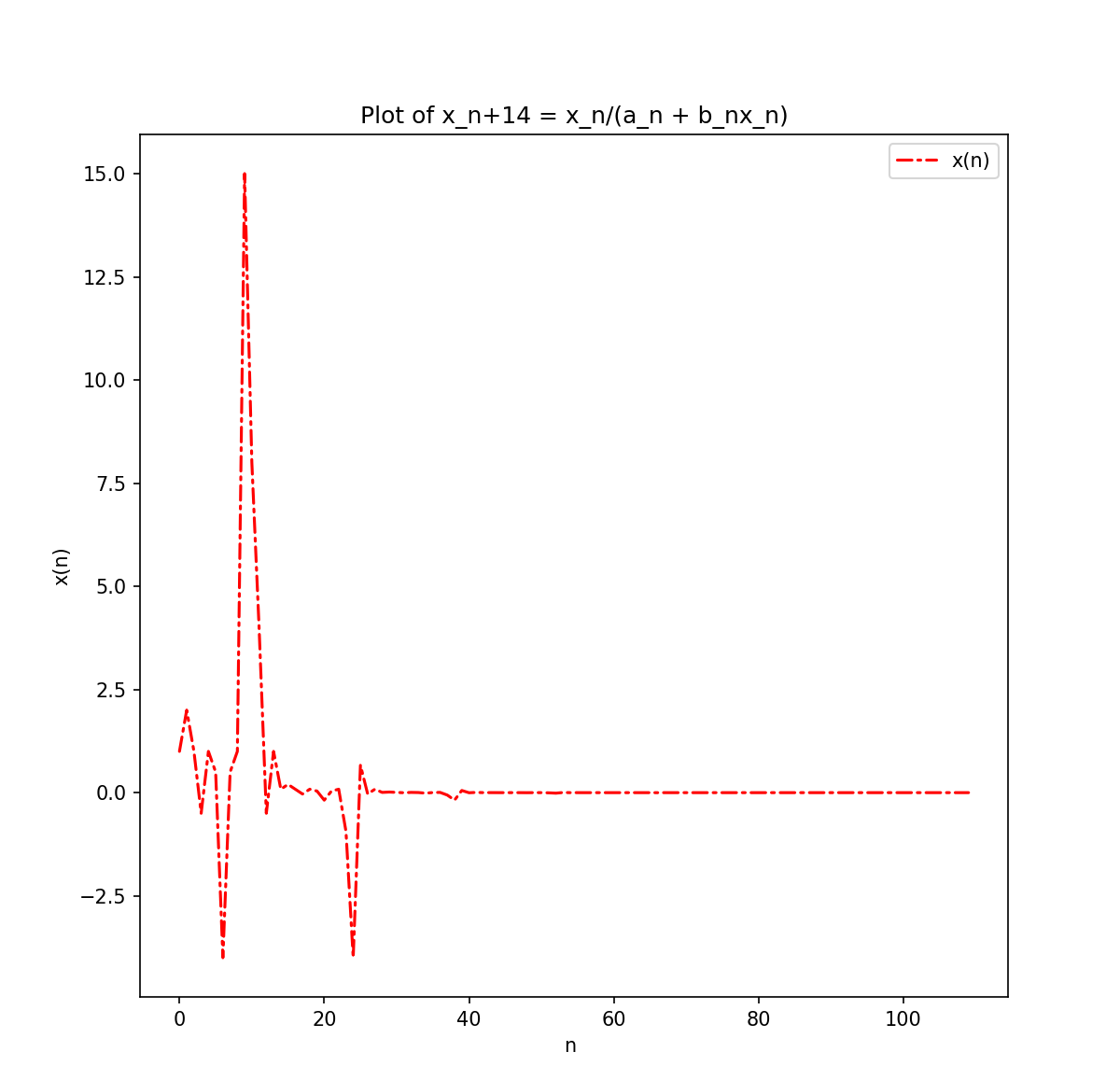}
			\caption{\scriptsize Graph of 
				$z_{n+14} = \dfrac{z_n}{14-2z_n}$
				.\label{theo4a}}
	\end{minipage}\hspace{0.5cm}
	\begin{minipage}{0.45\textwidth}
		\centering
		\includegraphics[scale=0.25]{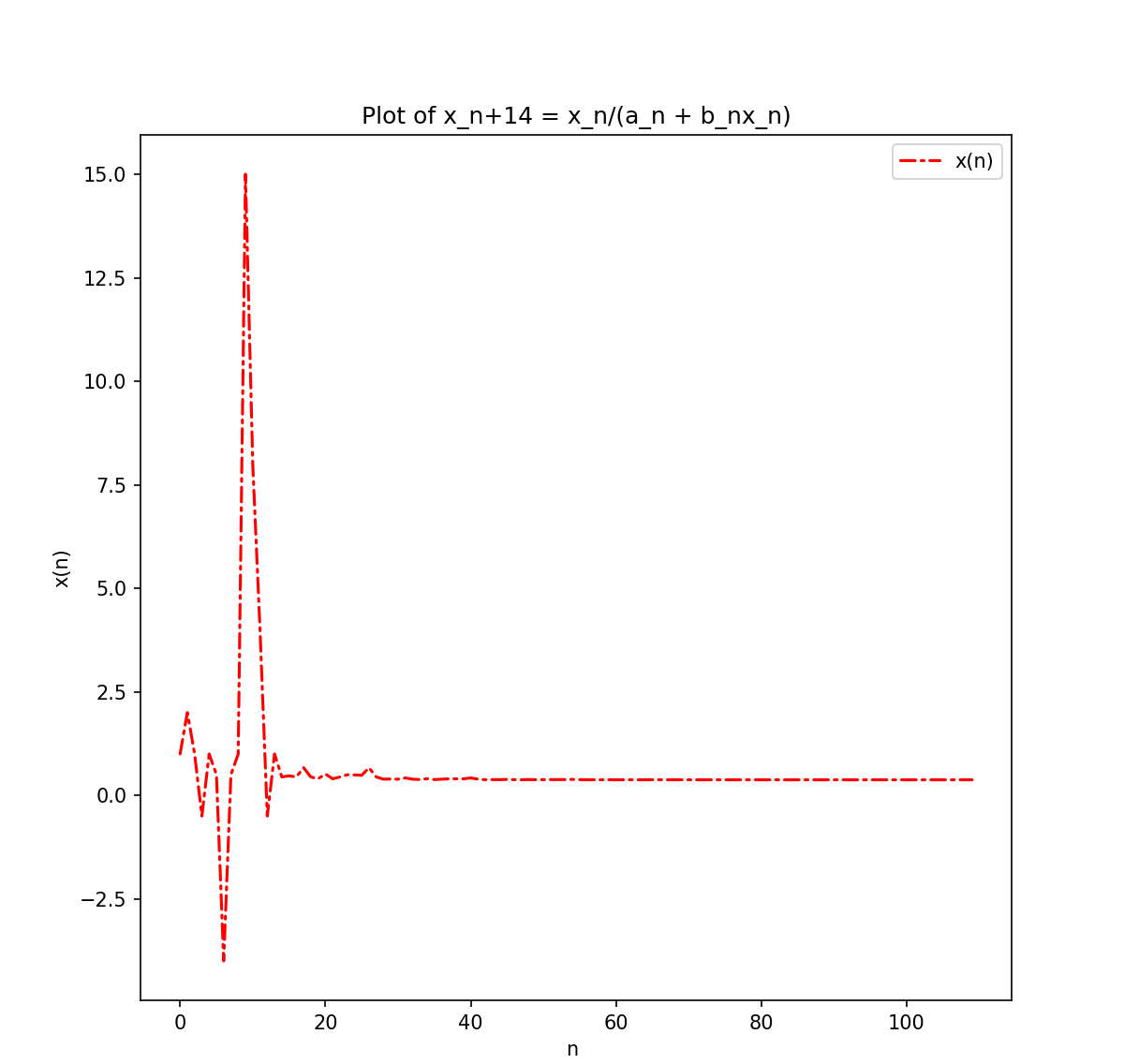}
		\caption{\scriptsize Graph of 
			$z_{n+14} = \dfrac{z_n}{0.25+2z_n}$.\label{theo4b}}
	\end{minipage}
\end{figure} 
\noindent In Figures \ref{theo3a} and \ref{theo3b}, we used the initial conditions $z_0 = 1; z_1 = 2; z_2 = 1; z_3 = -1/2; z_4 = 1;
z_5 = 1/2; z_6 = -4; z_7 = 1/2; z_8=1; z_9=15, z_{10}=8;z_{11}=4; z_{12}=-1/2; z_{13}=1$.
\begin{theorem}
	If $A=1$, then the equilibrium point $\bar{z}=0$ of \eqref{bohner} is non-hyperbolic.   
\end{theorem}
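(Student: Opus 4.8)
The plan is to locate the equilibrium, linearise the map, and read off the characteristic roots. First I would recall that the equilibria of \eqref{bohner} solve $\bar z(A + B\bar z - 1) = 0$; setting $A = 1$ this reduces to $B\bar z^{2} = 0$, so (for $B \neq 0$) the only equilibrium is $\bar z = 0$, which is the one in the statement.

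Next I would compute the linearisation at $\bar z = 0$. Writing $f(z_n) = z_n/(A + B z_n)$ as in the preceding theorem, we have $f_{,z_n}(z_n) = A/(A + B z_n)^{2}$, hence $f_{,z_n}(0) = 1/A = 1$ when $A = 1$. Consequently, in the notation of \eqref{ceq}, the only nonzero coefficient is $p_{0} = f_{,z_n}(0) = 1$, and the characteristic equation of \eqref{bohner} near $\bar z = 0$ is simply $\lambda^{k} - 1 = 0$.

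Finally, the roots of $\lambda^{k} = 1$ are the $k$-th roots of unity $\lambda_{j} = e^{2\pi i j/k}$, $j = 0, \dots, k-1$, each of which satisfies $|\lambda_{j}| = 1$. Since the characteristic equation has (in fact, only) roots of modulus one, the definition of a non-hyperbolic equilibrium applies directly, and $\bar z = 0$ is non-hyperbolic, as claimed.

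There is essentially no obstacle here; the only subtlety worth flagging in the write-up is that Theorem \ref{theo2} is silent in this borderline situation — neither part (i) nor part (ii) applies — which is exactly why the correct conclusion is the non-hyperbolic classification rather than a statement about (in)stability, and one might add a remark that the true asymptotic behaviour in this case is instead captured by the explicit solution \eqref{solmuk1c}.
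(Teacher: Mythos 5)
Your argument is correct and matches the paper's proof: both identify $\bar z=0$ as the unique equilibrium when $A=1$, compute $f_{,z_n}(0)=1/A=1$ to get the characteristic equation $\lambda^k-1=0$, and invoke the existence of a root of modulus one to conclude non-hyperbolicity. Your write-up merely supplies more of the intermediate detail (and a sensible closing remark) than the paper does.
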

\begin{proof}
	Here, the only equilibrium point of \eqref{bohner}  is $\bar{z}=0$. One can readily check that the characteristic equation of \eqref{bohner} around this equilibrium point  is given by $\lambda ^{k}-1=0.$ There exists a root of $\lambda ^{k}-1=0$  with modulus one. This completes the proof.
	\end{proof}
\section{Conclusion}
\noindent We performed the invariance analysis of the higher-order Beverton-Holt difference equation. Symmetries and the formula solutions are presented. We utilized the canonical coordinate to derive invariants that have been used reduce linearize the equation and eventually obtained the solutions in closed form. We have also presented periodic solutions that satisfy certain ansatz. Lastly, we studied the stability of the equilibrium points of the model.


\begin{thebibliography}{22}
	\bibitem{Bev0} Ray J.H. Beverton, Sidney J. Holt, On the Dynamics of Exploited Fish Populations, Fish. Invest. (Great Britain, Ministry of Agriculture, Fisheries, and Food), vol.19\&29, H. M. Stationery Off., London, 1957.
\bibitem{Bev} M. Bohnera, F. M.Dannanb and  S. Streiperta, A nonautonomous Beverton?Holt equation of higher order, \textit{J. Math.Anal.Appl.} \textbf{457} (2018), 114 -- 133.
 \bibitem{El3} E. M. Elsayed, Solution and Attractivity for a Rational Recursive Sequence, {\it Discrete Dynamics in Nature and Society}, {\bf 2011} (2011) Article ID 982309, 17 pages.
\bibitem{FK1} M. Folly-Gbetoula, Symmetry, reductions and exact solutions of the difference equation $u_{n+2}=(au_n)/(1+bu_nu_{n+1})$,{\it J. Diff. Equations and Applications}, \textbf{23:6} (2017) 1017-1024.
\bibitem{FK} M. Folly-Gbetoula and A.H. Kara, Symmetries, conservation laws, and integrability of difference equations, \textit{Advances in Difference Equations}, \textbf{2014}, 2014.
\bibitem{kgat} M. Folly-Gbetoula, Kgatliso Mkhwanazi and  D. Nyirenda, On a study of a family  of higher order recurrence relations, {\it Mathematical Problems in Engineering} \textbf{2022} (2022), Article ID 6770105, 11 pages.
\bibitem{ladas} Grove, E. A.; Ladas, G. {\it Periodicities in Nonlinear Difference Equations}, Vol. 4; Chapman \& Hall/CRC: Boca Raton, USA, 2005. 
\bibitem{hydon0} P. E. Hydon, Difference Equations by Differential Equation Methods, \textit{Cambridge University Press} (2014).
\bibitem{Ibrahim} T. F. Ibrahim, On the third order rational difference equation $x_{n+1}=x_nx_{n-2}/x_{n-1}(a+bx_nx_{n-2})$, \textit{Int. J. Contemp. Math. Sciences} \textbf{4:27} (2009), 1321-1334.
\bibitem{RK}  R. Karatas, On the solutions of the recursive sequence $x_{n+1}=\frac{ax_{n-(2k+1)}}{-a+x_{n-k}x_{n-(2k+1)}}$, {\it Selcuk J. Appl. Math.}, \textbf{9:2} (2008) 3-8.
\bibitem{Maeda} S. Maeda, The Similarity Method for Difference Equations, \textit{IMA Journal of Applied Mathematics}, \textbf{38} (1987), 129-134.
\bibitem{IY}  I. Yalcinkaya, On the global attractivity of positive solutions of a rational difference equation, {\it Selcuk J. Appl. Math.}, \textbf{9:2} (2008) 3-8.
\end{thebibliography}
\end{document}